\newtheorem{Thm}{Theorem}[section]
\newtheorem{Prop}[Thm]{Proposition}
\newtheorem{Lem}[Thm]{Lemma}
\newtheorem{Cor}[Thm]{Corollary}
\newtheorem{Thmint}{Theorem}[section]
\newtheorem{Propint}[Thmint]{Proposition}
\theoremstyle{definition}
\newtheorem{Rem}[Thm]{Remark}
\newcommand{\Cs}{C$^\ast$}
\newcommand{\id}{\mbox{\rm id}}
\newcommand{\Homeo}{\mathop{{\rm Homeo}}}
\newcommand{\rc}{\mathop{\rtimes _{\mathrm r}}}
\newcommand{\rca}[1]{\mathop{\rtimes _{{\mathrm r}, #1}}}
\newcommand{\fc}[1]{\mathop{\rtimes _{#1}}}
\newcommand{\ad}{\mathop{\rm Ad}}
\title[\Cs-crossed product norms]{Simple equivariant \Cs-algebras whose full and reduced crossed products coincide}
\author{Yuhei Suzuki}
\subjclass[2000]{Primary~ 
46L55, Secondary~46L05, 
54H20}
\keywords{Group actions on \Cs-algebras, amenability of actions, crossed product norms.}
\address{Graduate school of mathematics, Nagoya University, Chikusaku, Nagoya, 464-8602, Japan}
\email{yuhei.suzuki@math.nagoya-u.ac.jp}
\begin{document}
\begin{abstract}
For any second countable locally compact group $G$, we construct a simple $G$-C$^\ast$-algebra
whose full and reduced crossed product norms coincide.
We then construct its $G$-equivariant representation on another simple $G$-\Cs-algebra without the coincidence condition.
This settles two problems posed by Anantharaman-Delaroche in 2002.
Some constructions involve the Baire category theorem.
\end{abstract}
\maketitle
\section{Introduction}\label{Sec:intro}
Let $G$ be a locally compact group acting on a \Cs-algebra $A$.
Then one can form two canonical \Cs-completions of the twisted convolution algebra $C_{\mathrm c}(G, A)$, the full crossed product $A \rtimes G$ and the reduced crossed product $A \rc G$, defined via the universal and the regular covariant representation respectively. We refer the reader to \cite{BO} and \cite{Ped} for
details and basic facts on the crossed product construction.
Since both completions have their own advantages, it is plausible to find a reasonable condition characterizing the coincidence of these two completions.
Note that for group algebras (i.e., the case $A=\mathbb{C}$),
Hulanicki \cite{Hul} has shown that amenability characterizes the coincidence of the two completions.
Motivated by Hulanicki's theorem, it is natural to attempt to
find an appropriate notion of amenability for group actions on \Cs-algebras which characterizes the coincidence of the two crossed products.

In the seminal paper \cite{Ana}, Anantharaman-Delaroche posed a few problems to pursue what is the genuine definition of amenability of group actions on \Cs-algebras.
In this paper, we negatively answer the following questions (c) and (d) stated in Section 9.2 of \cite{Ana}.
\begin{enumerate}[\upshape(a)]\setcounter{enumi}{2}
\item
{\it Is the property $A \rtimes G = A \rc G$ functorial in an appropriate sense?
The strongest reasonable functoriality property is the following one: given any
$G$-\Cs-algebra $B$ and any non-degenerate $G$-representation $\Phi\colon A\rightarrow M(B)$ satisfying $\Phi(Z(M(A)))\subset Z(M(B))$, does the equality 
$A \rtimes G= A \rc G$ imply that $B \rtimes G= B \rc G$?}
\item
{\it If $A$ is a simple $G$-\Cs-algebra, does the equality $A \rtimes G = A \rc G$ imply that $G$ is amenable?}
\end{enumerate}
Here and throughout the paper, the equation $A \rtimes G = A \rc G$ means
the coincidence of the two crossed product norms.
Otherwise we denote $A \rtimes G \neq A \rc G$.
For the question (c), we note that amenability of group actions on locally compact Hausdorff spaces (Definition 2.1 of \cite{Ana}) has such a functorial property; indeed amenability inherits to extensions.
(Here recall that for two actions $\alpha$, $\beta$ of $G$ on compact spaces $X$, $Y$ respectively, $\alpha$ is said to be an extension of $\beta$
if there is a $G$-equivariant quotient map
from $X$ onto $Y$.)

More precisely, we prove the following theorem.
Since simple \Cs-algebras have no central multipliers other than scalars,
this settles both questions (c) and (d).
\begin{Thmint}\label{Thmint:main}
Let $G$ be a second countable locally compact non-amenable group.
Then there are simple $G$-\Cs-algebras $A, B$ and
a non-degenerate $G$-embedding $\Phi\colon A\rightarrow M(B)$
satisfying
\[A \rtimes G = A \rc G,\]
\[B \rtimes G \neq B \rc G.\]
\end{Thmint}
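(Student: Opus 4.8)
The plan is to split the theorem into two parts of very unequal difficulty. The real content is to produce, for an arbitrary second countable locally compact non-amenable $G$, a single \emph{simple} separable $G$-\Cs-algebra $A$ with $A\rtimes G = A\rc G$; once such an $A$ is in hand, a second simple algebra $B$ failing the coincidence, together with a non-degenerate $G$-embedding $\Phi\colon A\to M(B)$, will follow by a soft argument based on the regular covariant representation. Note that since $A$ is simple, $Z(M(A)) = \mathbb{C}1$ automatically lies in $Z(M(B))$, so the centrality hypothesis of question (c) comes for free and Theorem~\ref{Thmint:main} is exactly what is needed.

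For the construction of $A$, I would take it to be an inductive limit $A = \varinjlim(A_n,\phi_n)$ of the \emph{non-simple} building blocks $A_n := C_0(G)\otimes\mathcal{O}_2$ carrying the $G$-action $\tau\otimes\mathrm{id}$, where $\tau$ is left translation. Two facts make this work. First, the translation action satisfies $C_0(G)\rtimes_\tau G\cong\mathcal{K}(L^2(G))\cong C_0(G)\rc_\tau G$ (a classical consequence of Green's imprimitivity theorem applied to the trivial subgroup), and $\mathcal{O}_2$ is nuclear, so $A_n\rtimes G = A_n\rc G$ (both equal $\mathcal{K}(L^2(G))\otimes\mathcal{O}_2$). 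Second, the full and the reduced crossed product functors both commute with inductive limits over $G$-equivariant $\ast$-homomorphisms, hence $A\rtimes G = \varinjlim(A_n\rtimes G) = \varinjlim(A_n\rc G) = A\rc G$ no matter how the equivariant connecting maps are chosen. So all the work goes into choosing $G$-equivariant $\phi_n\colon A_n\to A_{n+1}$ making $\varinjlim(A_n,\phi_n)$ simple. Here I would fix a sequence $s_1,s_2,\dots\in\mathcal{O}_2$ of isometries with pairwise orthogonal ranges, together with elements $c_{n,i}\in G$, and set
\[\phi_n(F)(g) := \sum_{i=1}^{k_n} s_i\,F(gc_{n,i})\,s_i^{\ast},\qquad F\in C_0(G,\mathcal{O}_2),\ g\in G.\]
Each $\phi_n$ is an injective $\ast$-homomorphism, and it is $G$-equivariant because right translations on $C_0(G)$ commute with $\tau$ while the maps $\mathrm{Ad}\,s_i$ act on the factor the $G$-action ignores. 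Composing the $\phi_n$'s spreads the ``$G$-support'' of an element over the products of the $c_{n,i}$'s and, since $\mathcal{O}_2$ is simple, fills the other tensor factor; thus by arranging the $c_{n,i}$'s suitably (it is enough that the products they generate are dense in $G$) one can force every proper ideal of the limit to vanish. Making the countably many requisite conditions hold at once is exactly where the Baire category theorem enters: in an appropriate Polish space of admissible systems $(c_{n,i})$, for each $a$ in a fixed countable dense subset of $\bigcup_n A_n$ the set of systems for which the ideal generated by the image of $a$ is all of $A$ is open and dense, so a generic system yields a simple limit, which is separable since $G$ is second countable.

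For the second part, let $\pi_0\colon A\to B(H_0)$ be a faithful non-degenerate representation on a separable Hilbert space and let $(\widetilde{\pi},1\otimes\lambda)$ be the associated regular covariant representation of $(A,G,\alpha)$ on $H_0\otimes L^2(G)$, so that $(\widetilde{\pi}(a)\xi)(g) = \pi_0(\alpha_{g^{-1}}(a))\xi(g)$ and a direct check gives $(1\otimes\lambda_g)\,\widetilde{\pi}(a)\,(1\otimes\lambda_g)^{\ast} = \widetilde{\pi}(\alpha_g(a))$. Put $B := \mathcal{K}(H_0\otimes L^2(G))$ with the $G$-action $\mathrm{Ad}(1\otimes\lambda)$; then $\Phi := \widetilde{\pi}\colon A\to B(H_0\otimes L^2(G)) = M(B)$ is a $G$-equivariant $\ast$-homomorphism which is injective (because $\pi_0$ is) and non-degenerate (approximate units of $A$ go to the identity strictly), hence a non-degenerate $G$-embedding, and $B$ is simple. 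Finally $\mathrm{Ad}(1\otimes\lambda)$ is implemented by the genuine unitary representation $1\otimes\lambda$ of $G$ in $M(B)$, so it is exterior equivalent to the trivial action; therefore $B\rtimes G\cong B\otimes\fg G$ and $B\rc G\cong B\otimes\rg G$, and (using that $B=\mathcal{K}$ is nuclear) the canonical surjection $B\rtimes G\to B\rc G$ has kernel $B\otimes\ker(\fg G\to\rg G)$, which is non-zero precisely because $G$ is non-amenable. Thus $B\rtimes G\neq B\rc G$, as required.

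The entire difficulty is concentrated in the construction of $A$: one wants a \emph{simple} $G$-\Cs-algebra whose crossed product norms coincide, for a $G$ that need not even be exact, so there is no amenable action on a compact space to exploit, and one cannot place $C_0(G)$ in the center of $M(A)$ without destroying simplicity. The resolution is to keep the equality of crossed product norms as a property manifestly stable under inductive limits of the (non-simple) blocks $C_0(G)\otimes\mathcal{O}_2$, and to force simplicity only in the limit, through a generic, Baire-category choice of connecting maps.
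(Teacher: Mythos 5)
Your second step --- taking $B=\mathbb{K}(H_0\otimes L^2(G))$ with the inner action $\ad(1\otimes\lambda)$ and $\Phi$ the regular covariant representation --- is correct, and is essentially the paper's Lemma on inner actions (there the same mechanism is packaged as: a $G$-ideal on which the action is $\ad\circ\varphi$ forces $B\rtimes G\neq B\rc G$ for non-amenable $G$). That half is fine, and if anything slightly leaner than the paper's construction of $B$.

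The construction of $A$, however, which you rightly identify as carrying all the difficulty, does not work: the inductive limit $\varinjlim\bigl(C_0(G)\otimes\mathcal{O}_2,\phi_n\bigr)$ is \emph{never} simple for your connecting maps, no matter how the $c_{n,i}$ are chosen. The point is that $\phi_n$ acts on supports by right translation by the \emph{same} finite set for every element, so the relative position of two supports is preserved forever and no absorption occurs. Concretely, since $\mathcal{O}_2$ is simple the closed ideal of $A_N$ generated by $F$ is $C_0(U_F)\otimes\mathcal{O}_2$ with $U_F=\{g: F(g)\neq 0\}$, and $\phi_{N,n}$ carries an element with open support $U$ to one with open support $U R_{n,N}^{-1}$, where $R_{n,N}$ is the finite set of products $c_{N-1,i_{N-1}}\cdots c_{n,i_n}$. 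Take $G$ discrete (e.g.\ $G=F_2$, which is covered by the theorem), and $a=\delta_u\otimes x$, $b=\delta_k\otimes y$ in $A_n$ with $u\neq k$. Then $\mu_n(b)$ lies within distance $<\|y\|$ of the ideal generated by $\mu_n(a)$ only if $kR^{-1}\subseteq uR^{-1}$ for $R=R_{n,N}$ and some $N$, which unwinds to $Rd\subseteq R$ with $d=k^{-1}u\neq e$; iterating gives $\{r_0d^m:m\geq 0\}\subseteq R$, impossible for a finite set $R$ and an infinite-order $d$. Hence $\mu_n(b)$ stays at distance $\|y\|$ from the ideal generated by $\mu_n(a)$ at every stage, and the limit has proper ideals. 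No Baire-category choice of the $c_{n,i}$ can repair this, and (by equivariance) any $G$-map $C_0(G)\otimes\mathcal{O}_2\to C_0(G)\otimes\mathcal{O}_2$ is essentially of this translation form, so the obstruction is structural, not a matter of tuning parameters. The paper escapes this tension by an entirely different device: simplicity is produced by crossing $C_0(X\times G)$ with an auxiliary infinite-rank free group $\Gamma$ acting freely, amenably, and minimally on a subgroup $\Gamma_S$ killed by a dense homomorphism $\Gamma\to G$ (this is where Baire category is used, to find such a Cantor system), while the coincidence of the $G$-crossed-product norms comes from the commuting right-translation action via $C_0(G)\rtimes_\rho G\cong\mathbb{K}(L^2(G))$ together with amenability of the $\Gamma$-action. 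You would need an idea of this kind --- an extra source of simplicity that does not fight the weak containment --- to complete the first half.
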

We note that our proof of Theorem \ref{Thmint:main} involves the Baire category theorem.
We also remark that \Cs-algebras constructed in the proof of Theorem \ref{Thmint:main} are all non-unital.
We do not know if the analogous statements hold true in the category of unital simple $G$-\Cs-algebras.
Still, when $G$ is discrete and exact, one can settle it as follows.
\begin{Propint}\label{Propint:unitalsimple}
Let $G$ be a discrete non-amenable exact group.
Then there is an inclusion of unital simple $G$-\Cs-algebras $A \subset B$
with a $G$-conditional expectation
satisfying
\[A \rtimes G = A \rc G,\]
\[B \rtimes G \neq B \rc G.\]
\end{Propint}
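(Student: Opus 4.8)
The plan is to run a construction in the spirit of Theorem~\ref{Thmint:main}, now exploiting exactness of $G$ to keep the algebras unital and to produce the conditional expectation. Since $G$ is discrete, second countability merely says $G$ is countable. Two observations frame the argument. First, by Anantharaman-Delaroche's theory \cite{Ana}, any $G$-\Cs-algebra that is amenable in the sense of \cite{Ana} has coincident crossed product norms; and since $G$ is exact it acts amenably on some compact metrizable space $\mathfrak X$ (which we are free to choose, e.g.\ a Cantor set), so the $G$-\Cs-algebras $C(\mathfrak X)\otimes M_n$, $C(\mathfrak X)$ sitting centrally, are amenable. Hence it suffices to produce $A$ as a simple unital amenable $G$-\Cs-algebra. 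Secondly, a $G$-conditional expectation $E\colon B\to A$ induces $G$-equivariant isometric embeddings $A\rtimes G\hookrightarrow B\rtimes G$ and $A\rc G\hookrightarrow B\rc G$ compatible with the canonical quotients, so \emph{coincidence descends from $B$ to $A$ automatically}. This pins down the real difficulty with $B$: since $A$ is coincident and $G$ is non-amenable, $A$ carries no $G$-invariant state, hence neither does $B$ (its restriction to the $G$-invariant subalgebra $A$ would be one); and $B$, being simple, has no nontrivial $G$-equivariant quotient. Thus neither of the two routine devices for certifying $B\rtimes G\neq B\rc G$ is available, and the strict inequality must be engineered directly --- precisely the situation already met in Theorem~\ref{Thmint:main}, and the reason the Baire category theorem enters.

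Concretely, I would build $A=\varinjlim(A_k,\iota_k)$ as an inductive limit of unital amenable $G$-\Cs-algebras $A_k=C(\mathfrak X)\otimes M_{d_k}$, choosing the unital $G$-equivariant connecting homomorphisms $\iota_k$ from the Polish space of all admissible sequences so that a dense $G_\delta$ condition forcing simplicity of the limit is met (a sufficiently generic sequence ``mixes'' the building blocks enough to leave no proper nonzero ideal). As each $A_k$ is amenable its two crossed product norms agree and both crossed-product functors commute with inductive limits, so $A\rtimes G=A\rc G$, while $A$, being an increasing union of amenable $G$-\Cs-algebras, is amenable and unital. Next I would enlarge $A$ to a simple unital $B$ carrying a canonical $G$-conditional expectation onto $A$ --- e.g.\ a reduced crossed product $B=A\rc\Lambda$ by an outer action of an auxiliary discrete group $\Lambda$ commuting with the $G$-action (with $E$ the canonical expectation), or a Cuntz--Pimsner algebra $\mathcal O_{\mathcal E}$ of a $G$-equivariant Hilbert $A$-bimodule (with $E$ built from the gauge action) --- using Kishimoto-type criteria to guarantee simplicity of $B$, and again choosing all the structural data generically so that a covariant representation of $(B,G)$ witnessing non-amenability of the $G$-action survives the limiting process and does not factor through the reduced norm, giving $B\rtimes G\neq B\rc G$. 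Each requirement (``$A$ simple'', ``$B$ simple'', ``$E$ is $G$-equivariant'', ``every $A_k$ amenable'', ``the non-coincidence witness persists'') is either automatic or cuts out a dense $G_\delta$ in the ambient Polish space of choices, so the Baire category theorem yields one choice satisfying all of them.

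The hard step is the last one: showing $B\rtimes G\neq B\rc G$ for a \emph{simple} $B$ with no $G$-invariant state. One must exhibit, inside the generically constructed $B$, an explicit $f\in C_{\mathrm c}(G,B)$ and a covariant representation on which $\|f\|$ strictly exceeds its reduced norm, and then argue --- this is where the category argument is really used --- that the set of structural data for which such a witness is not destroyed by the inductive-limit and crossed-product constructions is comeager, hence compatible with the comeager simplicity conditions. A secondary, genuinely technical, hurdle is to keep enough room among the \emph{$G$-equivariant} connecting maps $\iota_k$ (and in the data defining $B$) to make the genericity arguments run: if the action on $\mathfrak X$ is taken free and minimal there are almost no $G$-equivariant maps into objects with trivial $G$-action, so the matrix amplifications and the auxiliary $\Lambda$-action must be introduced compatibly with the $G$-action on $\mathfrak X$ --- for instance allowing conjugation by unitaries whose $G$-orbits are controlled, or incorporating a Bernoulli-type tensor factor into the $A_k$ --- and this set-up has to precede any precise Baire-category statement.
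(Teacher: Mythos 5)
Your proposal never actually proves the key inequality $B\rtimes G\neq B\rc G$: you defer it as ``the hard step'', to be handled by exhibiting an unspecified witness $f\in C_{\mathrm c}(G,B)$ and a Baire-category argument showing the witness survives the limiting process, but neither the witness nor the comeagerness claim is produced, and it is far from clear how either would be. This is the heart of the statement, and the paper resolves it by an elementary device your plan misses entirely: take $B:=A\rc G$ equipped with the \emph{conjugate} action $\ad\circ u$, where $u\colon G\to\mathcal{U}(A\rc G)$ is the canonical unitary representation. Since this action is implemented by a group homomorphism into the unitary group, Lemma \ref{Lem:norm} (which reduces the question, via the twist $f\mapsto f(\cdot)\varphi(\cdot)$, to the trivial action and hence to Hulanicki's theorem) gives $B\rtimes G\neq B\rc G$ immediately for non-amenable $G$; simplicity of $B$ and the $G$-equivariant conditional expectation $E\colon A\rc G\to A$ come for free. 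Your diagnosis that the strict inequality ``must be engineered directly --- precisely the situation already met in Theorem \ref{Thmint:main}, and the reason the Baire category theorem enters'' also misreads that proof: there too the inequality comes from Lemma \ref{Lem:norm}, and the Baire category theorem is used only to produce an amenable free $\Gamma$-action whose restriction to $\Gamma_S$ is minimal. The paper states explicitly that Propositions \ref{Propint:unitalsimple} and \ref{Propint:functorialu} avoid the Baire category theorem.

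The construction of $A$ is likewise left incomplete. Obtaining a \emph{simple} unital inductive limit of $C(\mathfrak X)\otimes M_{d_k}$ requires connecting maps that mix the fibres over $\mathfrak X$, and you yourself flag that there is almost no room for this among unital $G$-equivariant maps once the action on $\mathfrak X$ is fixed (let alone free and minimal); no resolution of this tension is given. The paper sidesteps it: with $\alpha$ an amenable minimal free action of $G$ on a compact space $Y$, set $C:=C(Y)\rc G$ (simple by Archbold--Spielberg) and $A:=C^{\otimes\mathbb N}$ with the diagonal conjugate action. Then $A$ is simple automatically, and it is the $G$-inductive limit of $A_n:=C^{\otimes n}\otimes C(Y)$, each of which contains the amenable commutative $G$-algebra $C(Y)$ as a central subalgebra, so Proposition 4.8 of \cite{Ana87} gives $A_n\rtimes G=A_n\rc G$, hence $A\rtimes G=A\rc G$ in the limit. (Your side remark that coincidence passes from $B$ down to $A$ along a non-degenerate $G$-conditional expectation is correct, but it plays no role.) In short, the architecture you propose cannot be verified as written at either of its two essential points, whereas the statement admits a short proof by a different, entirely non-generic mechanism.
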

As a byproduct of our constructions in Theorem \ref{Thmint:main} and Proposition \ref{Propint:unitalsimple},
one can build the Cuntz algebra $\mathcal{O}_2$ and its stabilization $\mathcal{O}_2 \otimes \mathbb{K}$ as the full crossed products of actions of all possible groups
on themselves, which we believe is of independent interest. See Remark \ref{Rem:Cuntz} for a detailed explanation.

We end this paper by observing the following partial result in the unital setting.
This in particular settles question (c) even in the unital case for locally compact exact groups.
\begin{Propint}\label{Propint:functorialu}
Let $G$ be a locally compact non-amenable exact group.
Then there is an inclusion $A \subset B$ of unital $G$-\Cs-algebras
satisfying the following conditions.
\begin{enumerate}[\upshape (1)]
\item $Z(A)=Z(B)=\mathbb{C}$.
\item The inclusion $A \subset B$ admits a $G$-conditional expectation.
\item $A \rtimes G= A \rc G$, while $B \rtimes G \neq B \rc G$.
\end{enumerate}
\end{Propint}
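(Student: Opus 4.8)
The plan is to produce $A$ from the exactness of $G$ and then to produce $B$ from $A$ by adjoining a copy of the compact operators, equipped with a $G$-action exterior equivalent to the trivial one, for which full and reduced crossed products already differ.

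\emph{The algebra $A$.} Exactness of $G$ provides a compact $G$-space $Y$ with amenable action, and from this one wants a \emph{unital, nuclear, trivial-centre} $G$-\Cs-algebra $A$ with $A\rtimes G=A\rc G$. Two models seem available. First, one may take $A=\mathcal{O}_2$ with an amenable $G$-action: then $Z(A)=\mathbb{C}$ is simplicity, and $A\rtimes G=A\rc G$ is amenability of the action (Anantharaman-Delaroche). Alternatively, following the proof of Theorem~\ref{Thmint:main}, one may take a $G$-equivariant extension
\[0\longrightarrow A_0\longrightarrow A\longrightarrow C(Y)\longrightarrow 0,\]
with $A_0$ a simple non-unital $G$-\Cs-algebra satisfying $A_0\rtimes G=A_0\rc G$ as in Theorem~\ref{Thmint:main}, and Busby invariant a $G$-equivariant \emph{unital embedding} $C(Y)\hookrightarrow M(A_0)/A_0$. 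This makes $A$ unital; $Z(A)=\mathbb{C}$ follows from injectivity of the Busby map and the elementary fact that $Z(M(A_0))=\mathbb{C}$ for simple $A_0$ (a nontrivial central element of $M(A_0)$ would, via its spectral projections, produce a proper nonzero ideal of $A_0$); and $A\rtimes G=A\rc G$ follows by comparing, via the canonical surjections, the exact sequence $0\to A_0\rtimes G\to A\rtimes G\to C(Y)\rtimes G\to 0$ with $0\to A_0\rc G\to A\rc G\to C(Y)\rc G\to 0$ — the latter exact because $G$ is exact — the outer comparison maps being isomorphisms (by the hypothesis on $A_0$, and by amenability of $G\curvearrowright Y$), so the middle one is too.

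\emph{The algebra $B$.} Fix a faithful unital representation $\pi_0\colon A\to\mathcal{B}(H_0)$, put $H:=L^2(G)\otimes H_0$ and $V_g:=\lambda_g\otimes 1_{H_0}$ with $\lambda$ the left regular representation, and let $\tilde\rho\colon A\to\mathcal{B}(H)$ be the induced regular covariant representation, $(\tilde\rho(a)\xi)(g)=\pi_0(\alpha_{g^{-1}}(a))\xi(g)$; this is a faithful unital $\ast$-homomorphism intertwining $\alpha$ with $\ad V$. Since $G$ is infinite, $\tilde\rho(a)$ is a decomposable operator over $G$ with constant fibre norm $\|a\|$, hence non-compact for $a\neq 0$, so $\tilde\rho(A)\cap\mathcal{K}(H)=0$; set
\[B:=\tilde\rho(A)+\mathcal{K}(H)\subseteq\mathcal{B}(H).\]
Then $B$ is a unital, $\ad V$-invariant \Cs-algebra containing $\tilde\rho(A)\cong A$ as a unital subalgebra, and it sits in a $G$-equivariant extension $0\to\mathcal{K}(H)\to B\to A\to 0$ split by $\tilde\rho$; hence the composition of the quotient map with $\tilde\rho$ is a $G$-conditional expectation $B\to\tilde\rho(A)$, giving (2). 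Because $\mathcal{K}(H)'\cap\mathcal{B}(H)=\mathbb{C}1$ and $1\in\tilde\rho(A)$, every central element of $B$ is a scalar, so $Z(B)=\mathbb{C}$; with $Z(A)=\mathbb{C}$ this is (1).

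\emph{The crossed products, and the main obstacle.} One has $A\rtimes G=A\rc G$ by construction. For $B$, consider the $G$-invariant ideal $J:=\mathcal{K}(H)$. As $g\mapsto V_g$ is a strictly continuous unitary representation of $G$ into $M(\mathcal{K}(H))$, the action $\ad V$ on $\mathcal{K}(H)$ is exterior equivalent to the trivial action, so $J\rtimes G\cong\mathcal{K}(H)\otimes\fg(G)$ and $J\rc G\cong\mathcal{K}(H)\otimes\rg(G)$, and the canonical surjection $J\rtimes G\to J\rc G$ becomes $\mathrm{id}\otimes(\fg(G)\to\rg(G))$, whose kernel is nonzero since $G$ is non-amenable (Hulanicki). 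As full and reduced crossed products both respect the inclusion $J\subseteq B$ compatibly with their canonical surjections, the kernel of $B\rtimes G\to B\rc G$ contains this nonzero kernel, so $B\rtimes G\neq B\rc G$, which is (3). The one substantial point is the construction of $A$: in the extension model one must build the $G$-equivariant unital embedding of $C(Y)$ into the corona algebra $M(A_0)/A_0$, which I expect to carry out by a Baire category argument mirroring the proof of Theorem~\ref{Thmint:main}; in the $\mathcal{O}_2$ model one needs an amenable action of the exact group $G$ on a unital simple nuclear \Cs-algebra. Everything else — the split extension, the conditional expectation, triviality of $Z(B)$, and the propagation of $B\rtimes G\neq B\rc G$ from the ideal $\mathcal{K}(H)$ — is routine.
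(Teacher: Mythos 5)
Your construction of $B$ from $A$ is essentially the paper's: the paper also represents $A$ covariantly on $\mathfrak{H}\otimes L^2(G)$ via the regular representation, adjoins the compacts, and gets the split $G$-exact sequence $0\to\mathbb{K}\to B\to A\to 0$, the $G$-conditional expectation, $Z(B)=\mathbb{C}$, and $B\rtimes G\neq B\rc G$ from the fact that the $G$-action on the ideal $\mathbb{K}$ is implemented by a strictly continuous unitary representation (the paper's Lemma \ref{Lem:norm}). That half of your argument is sound.

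The genuine gap is the construction of $A$, which you yourself flag as ``the one substantial point'' and then do not carry out; and both models you propose are harder than the problem requires. The $\mathcal{O}_2$ model asks for an amenable $G$-action on a \emph{unital simple} \Cs-algebra with $A\rtimes G=A\rc G$ for an arbitrary locally compact exact $G$ --- but the paper explicitly states it does not know whether its main theorem survives in the unital simple category for general locally compact $G$ (it only resolves this for discrete exact $G$ in Proposition \ref{Propint:unitalsimple}), and moreover for locally compact groups the implication ``amenable action on a noncommutative \Cs-algebra $\Rightarrow$ full $=$ reduced'' is not among the tools the paper has available (Proposition 4.8 of \cite{Ana87} is a discrete-group statement). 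The extension model hinges on producing a unital $G$-equivariant Busby map $C(Y)\to M(A_0)/A_0$, which is left entirely to a hoped-for Baire category argument. The paper's actual route sidesteps all of this by noticing that the statement only demands $Z(A)=\mathbb{C}$, not simplicity: it takes $A:=C(Y^{\mathbb{Z}})\fc{\sigma}\mathbb{Z}$, the crossed product by the shift, with the diagonal $G$-action $\alpha^{\otimes\mathbb{Z}}$ commuting with $\sigma$. This $A$ is unital for free; $Z(A)=\mathbb{C}$ because the shift is topologically free and topologically transitive on $Y^{\mathbb{Z}}$; and $A\rtimes G=A\rc G$ follows by commuting the $G$- and $\mathbb{Z}$-crossed products, using amenability of the diagonal action on the compact space $Y^{\mathbb{Z}}$ and amenability of $\mathbb{Z}$. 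No Baire category argument and no unital-simple input is needed. To complete your proof you would either have to supply one of your two missing constructions (each a nontrivial result in its own right) or replace your $A$ by something like the paper's shift crossed product.
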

We note that, unlike Theorem \ref{Thmint:main}, the proofs of Propositions \ref{Propint:unitalsimple} and \ref{Propint:functorialu}
avoid the Baire category theorem.

These statements clarify that $G$-actions on both the algebra of central multipliers and the structure space of the multiplier algebras
in general fail to detect ``amenability'' of actions on \Cs-algebras.
We note that an approach to amenability of actions via $Z(A^{\ast \ast})$, the center
of the second dual, is also studied in \cite{Ana} for discrete groups.

We close the introduction by summarizing recent progress related to the subject of the paper.
Matsumura \cite{Mat} has shown that for discrete exact groups,
amenablility of actions on compact Hausdorff spaces are characterized by
the coincidence of the two crossed products.
In particular the functorial property asked in question (c) in Section 9.2 of \cite{Ana}
holds true for actions of these groups on unital commutative \Cs-algebras.
Willett \cite{Wil} has constructed the first examples of (neither minimal nor essentially principal) non-amenable {\'e}tale groupoids whose full and reduced groupoid \Cs-algebras coincide.
Examples of principal groupoids with these conditions have been constructed by
Alekseev--Finn-Sell \cite{AFS}.
Brodzki--Cave--Li \cite{BCL} have established conjectured characterizations of exactness for
locally compact groups (cf.~\cite{Ana}, \cite{GK}, \cite{Oza}).

\subsection*{Notation}
\begin{itemize}
\item
For a $G$-\Cs-algebra $A$,
denote its full and reduced crossed products by $A \rtimes G$ and $A\rc G$ respectively.
When we need to specify the underlying action $\alpha$,
we denote them by $A \fc{\alpha} G$ and $A \rca{\alpha}G$ respectively.
\item The symbol `$\otimes$' stands for the minimal tensor products of \Cs-algebras and of completely positive maps, the diagonal actions of group actions, and the (Hilbert) tensor products of Hilbert spaces.
For an object $X$ of one of these categories, for $n\in \mathbb{N}$, for a set $S$,
we denote $\bigotimes_{i=1}^n X$
and $\bigotimes_{s\in S} X$ (if it makes a sense) by $X^{\otimes n}$ and $X^{\otimes S}$ respectively.
\item For a locally compact group $G$, denote $L^2(G)$ the $L^2$-space on $G$
with respect to the left Haar measure.
\item We identify two crossed products when there is a canonical isomorphism.
For instance, for a $(G \times H)$-\Cs-algebra $A$,
we denote
\[(A \rtimes G) \rtimes H = (A \rtimes H) \rtimes G=A \rtimes (G \times H).\]
Here the $H$-action on $A \rtimes G$ and the $G$-action on $A \rtimes H$
are the actions induced from the original action on $A$.
\item For a Hilbert space $\mathfrak{H}$, denote $\mathbb{K}(\mathfrak{H})$ (resp.~ $\mathbb{B}(\mathfrak{H})$) 
the \Cs-algebras of all compact (resp.~ bounded) operators on $\mathfrak{H}$. 
When $\mathfrak{H}=\ell^2(\mathbb{N})$, we simply denote $\mathbb{K}(\mathfrak{H})$ by $\mathbb{K}$.
\item For a \Cs-algebra $A$, denote $M(A)$ the multiplier algebra of $A$.
\item For a \Cs-algebra $A$, denote $Z(A)$ the center of $A$.
\item For a unital \Cs-algebra $A$, denote $\mathcal{U}(A)$ the unitary group of $A$.
\end{itemize}
\section{Generic properties of Cantor systems of infinite rank free group}
In this section we recall and establish a few generic properties of Cantor systems of an infinite rank free group. They become the key of the proof of Theorem \ref{Thmint:main}.

Let $X$ denote the Cantor set $\{0, 1\}^\mathbb{N}$.
We fix a (complete) metric $d_X$ on $X$.
On the homeomorphism group $\Homeo(X)$ of $X$,
we define a metric $d$ by
\[d(\varphi, \psi):= \max_{x\in X}d_X(\varphi(x), \psi(x)) +\max_{x\in X}d_X(\varphi^{-1}(x), \psi^{-1}(x)).\]
Then it is not hard to check that the $d$ is complete.
The induced topology
is equal to the uniform convergence topology (in particular it does not depend on the choice of $d_X$). This topology makes $\Homeo(X)$ a Polish group
(though we do not use the separability).

Fix two disjoint countable infinite sets $S$ and $T$.
Let $\Gamma$ denote the free group over the disjoint union
$U:=S\sqcup T$.
Denote $\Gamma_S$, $\Gamma_T$ the (free) subgroup of $\Gamma$
generated by $S, T$ respectively.
We then consider the set \[\mathcal{S}=\mathcal{S}(\Gamma, X):= \mathop{{\rm Hom}}(\Gamma, \Homeo(X))\]
of actions of ${\Gamma}$ on $X$.
To apply the Baire category theorem to $\mathcal{S}$,
we topologize $\mathcal{S}$ as follows.
(This is a standard approach to construct dynamical systems of new features;
we refer the reader to the introduction of \cite{Hoc} for information on this topic.)
The set $\mathcal{S}$ is a closed subset of the product space
$\Homeo(X)^{\Gamma}$, where the latter space is equipped with the product topology.
As $\Gamma$ is countable, the space $\Homeo(X)^\Gamma$
is Polish. The relative topology then defines a Polish space structure on $\mathcal{S}$.
Throughout the paper, we equip $\mathcal{S}$ with this topology.

For a property $\mathcal{P}$ of dynamical systems,
we say that $\mathcal{P}$ is open (resp.~G$_\delta$, dense, generic)
in $\mathcal{S}$
if the subset of $\mathcal{S}$ consisting of all elements with $\mathcal{P}$
is open (resp.~G$_\delta$, dense, residual).

The next two propositions are crucial in the proof of Theorem \ref{Thmint:main}.
The first proposition has been established in \cite{Suz}.
We refer the reader to \cite{Suz} for the proof.
\begin{Prop}[\cite{Suz}, Corollary 2.4]\label{Prop:amefre}
Freeness and amenability are G$_\delta$-dense in $\mathcal{S}$.
\end{Prop}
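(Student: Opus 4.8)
The plan is to prove separately that the set $\mathcal{F}\subset\mathcal{S}$ of free actions and the set $\mathcal{A}\subset\mathcal{S}$ of topologically amenable actions are each $G_\delta$ and dense in $\mathcal{S}$; since $\mathcal{S}$ is Polish, the Baire category theorem then gives that $\mathcal{F}\cap\mathcal{A}$ is dense $G_\delta$, which is the asserted statement. Two structural facts will be used throughout. Because $\Gamma$ is free on $U$, evaluation at $U$ identifies $\mathcal{S}$ topologically with the product $\Homeo(X)^{U}$, so every nonempty open subset of $\mathcal{S}$ contains a basic one of the form $V=\{\beta\in\mathcal{S}:\beta(u)\in V_{u}\ (u\in U_{0})\}$ with $U_{0}\subset U$ finite and each $V_{u}\subset\Homeo(X)$ nonempty open. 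Moreover every $g\in\Gamma$ is a word in finitely many elements of $U$, so $\beta\mapsto\beta(g)$ is continuous and closeness of two actions on an appropriate finite subset of $U$ forces closeness of their values at $g$.

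For the $G_\delta$ statements, note first that a homeomorphism $\varphi$ of the compact metric space $X$ is fixed-point free precisely when $\inf_{x\in X}d_{X}(x,\varphi(x))>0$, which is an open condition on $\varphi$; composing with the continuous maps $\beta\mapsto\beta(g)$ shows that each $\{\beta:\beta(g)\ \text{is fixed-point free}\}$ is open, and $\mathcal{F}$ is the intersection of these sets over the countable set $\Gamma\setminus\{e\}$. For $\mathcal{A}$ one uses the standard local description of amenability of an action of a countable discrete group: $\beta\in\mathcal{A}$ if and only if for every finite $F\subset\Gamma$ and every rational $\varepsilon>0$ there is a continuous map $\xi\colon X\to\operatorname{Prob}(\Gamma)$ with finitely supported values such that $\sup_{x\in X}\|g\cdot\xi^{x}-\xi^{\beta(g)x}\|_{1}<\varepsilon$ for all $g\in F$. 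For fixed $(F,\varepsilon)$ this defines an open set $A_{F,\varepsilon}$: a witnessing $\xi$ is uniformly continuous on $X$, and if $\beta'$ is close enough to $\beta$ on the finitely many generators occurring in $F$, then $\beta'(g)x$ is uniformly close to $\beta(g)x$, so the same $\xi$ still witnesses a slightly weaker inequality for $\beta'$. Intersecting over the countably many pairs $(F,\varepsilon)$ exhibits $\mathcal{A}$ as a $G_\delta$ set.

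By Baire it now suffices to prove that each of these open sets is dense. For a fixed $g\neq e$, density of $\{\beta:\beta(g)\ \text{is fixed-point free}\}$ is a routine category argument in $\Homeo(X)$: after enlarging $U_{0}$ to include the generators of $g$, one perturbs the finitely many relevant generators inside the $V_{u}$ to eliminate the fixed points of the value of $g$ (every nonempty open subset of $\Homeo(X)$ contains homeomorphisms with no periodic points, and more generally the value of a nontrivial word is made fixed-point free by an arbitrarily small perturbation of its generators). For $A_{F,\varepsilon}$ the essential input is that $\Gamma$, being the increasing union of the finitely generated free groups $F(U_{n})$ for an exhaustion $U=\bigcup_{n}U_{n}$ by finite sets, is an exact group; it therefore admits an amenable action on a Cantor set, and forming the product of this with a free action on a Cantor set (which every countable group admits) gives a free amenable action $\gamma$ of $\Gamma$ on a Cantor set. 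Realising that Cantor set as a deep coordinate tail of $X=\{0,1\}^{\mathbb{N}}$ and letting $\Gamma$ act trivially on the initial segment, one arranges in addition that every generator of $\gamma$ is $C^{0}$-close to the identity. It then remains to \emph{graft} $\gamma$ onto $\alpha$: to produce $\beta\in V$ that agrees with $\alpha$ to the prescribed resolution on the constrained generators, is free with the freeness coming from the $\gamma$-factor, and lies in $A_{F,\varepsilon}$ for every $(F,\varepsilon)$. For the last point it suffices, since $A_{F,\varepsilon}$ is open and contains every amenable action, that for each $(F,\varepsilon)$ the action $\beta$ agree, to a resolution depending on $(F,\varepsilon)$, with some action that is a product having $\gamma$ as one of its factors; such a product is amenable, a diagonal action with an amenable factor being amenable.

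The grafting is the main obstacle. An arbitrary $\alpha$ respects no product decomposition of $X$, and non-amenability of the subgroups of $\Gamma$ generated by the constrained generators rules out the naive strategies: one cannot approximate $\alpha$ by an action permuting the atoms of a fine clopen partition, as that would impose a F{\o}lner-type structure incompatible with non-amenability, and in general there is no amenable action $C^{0}$-close to $\alpha$. The correct construction does not place the $\gamma$-factor on one fixed coordinate tail but weaves copies of $\gamma$ into $X$ along a telescoping sequence of clopen partitions adapted to the uniform continuity of $\alpha$, so that at each stage the perturbed action splits, up to a controlled resolution, as a product of a truncation of $\alpha$ with $\gamma$, while still matching $\alpha$ to the required precision on $U_{0}$; the free amenable factor then places $\beta$ in every $A_{F,\varepsilon}$ and forces every orbit to be infinite. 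Coordinating these scales — in particular choosing the resolutions large enough relative to the amenability witnesses of the intermediate product actions — while keeping $\beta$ inside the prescribed neighbourhood is the technical heart of the argument, and it is this that is carried out in detail in \cite{Suz}.
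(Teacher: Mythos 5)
The paper gives no proof of this proposition --- it is quoted verbatim from \cite{Suz} --- so the comparison is with the argument there. Your architecture (show each property is $G_\delta$ and dense, then intersect by Baire) is the right one, and your $G_\delta$ arguments are fine: freeness is the countable intersection over $g\in\Gamma\setminus\{e\}$ of the open conditions ``$\beta(g)$ is fixed-point free'', and amenability is the countable intersection of the open sets $A_{F,\varepsilon}$, using continuity of $\beta\mapsto\beta(g)$ and the fact that a norm-continuous witness $\xi$ survives a small uniform perturbation of the action. The gap is the density step, which is precisely the part you do not carry out: you describe a ``weaving/telescoping'' construction qualitatively and then defer it to \cite{Suz}, which is circular when the statement to be proved \emph{is} that citation. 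Your density claim for freeness (``the value of a nontrivial word is made fixed-point free by an arbitrarily small perturbation of its generators'') is likewise asserted, not proved.

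More importantly, the obstruction you erect before deferring is not real, and the assertion that ``in general there is no amenable action $C^0$-close to $\alpha$'' is false. Fix a free amenable action $\gamma$ of $\Gamma$ on $X$ (your exactness argument for its existence is fine). Given $\alpha$, a finite $U_0\subset U$ and $\varepsilon>0$, choose a homeomorphism $h\colon X\to X\times X$, $h(x)=(p(x),q(x))$, with $d_X(p(x),x)\le\delta$ for all $x$; for $X=\{0,1\}^{\mathbb N}$ one simply interleaves coordinates so that $p$ preserves the first $n$ coordinates. Put $\beta:=h^{-1}\circ(\alpha\otimes\gamma)\circ h$. Then $p(\beta_u(x))=\alpha_u(p(x))$, hence
\[
d_X\bigl(\beta_u(x),\alpha_u(x)\bigr)\le d_X\bigl(\beta_u(x),p(\beta_u(x))\bigr)+d_X\bigl(\alpha_u(p(x)),\alpha_u(x)\bigr)\le \delta+\omega_{\alpha_u}(\delta),
\]
where $\omega_{\alpha_u}$ is a modulus of uniform continuity of $\alpha_u$, and similarly for inverses. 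Taking $\delta$ small relative to the moduli of the finitely many $\alpha_u^{\pm1}$, $u\in U_0$, places $\beta$ in the prescribed basic neighbourhood. Since $\beta$ is conjugate to $\alpha\otimes\gamma$, which maps equivariantly onto the amenable system $(X,\gamma)$ and has a free factor, $\beta$ is amenable and free; so free amenable actions are dense and Baire finishes the proof. Note that $\alpha$ need not respect any product decomposition --- only $p\approx\mathrm{id}$ and uniform continuity of the constrained generators are used --- and this single construction also renders your separate, unproved perturbation argument for freeness unnecessary.
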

The second proposition is a variant of Proposition 3.3 in \cite{Suz}.
\begin{Prop}\label{Prop:min}
The following property is G$_\delta$-dense in $\mathcal{S}$.
The restriction to $\Gamma_S$ is minimal.

\end{Prop}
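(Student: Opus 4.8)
The plan is to split the assertion into its two halves --- that the set $\mathcal{M}\subseteq\mathcal{S}$ of actions whose restriction to $\Gamma_S$ is minimal is $G_\delta$, and that it is dense --- and treat them separately. Fix once and for all a countable family $\{V_n\}_{n\in\mathbb{N}}$ of nonempty clopen subsets of $X$, none equal to $X$, forming a basis for the topology of $X$; this is possible since $X$ is the Cantor set. I will use the elementary reformulation of minimality: the $\Gamma_S$-action of $\sigma$ is minimal precisely when every $\Gamma_S$-orbit is dense, equivalently when every $\Gamma_S$-orbit meets every $V_n$, i.e.\ $\bigcup_{\gamma\in\Gamma_S}\sigma(\gamma)^{-1}(V_n)=X$ for all $n$; and since $X$ is compact and each $\sigma(\gamma)^{-1}(V_n)$ is open, this last equality holds iff $\bigcup_{\gamma\in F}\sigma(\gamma)^{-1}(V_n)=X$ for some finite $F\subseteq\Gamma_S$.

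For the $G_\delta$ part, the key observation is that for a fixed clopen set $V\subsetneq X$ the assignment $\varphi\mapsto\varphi^{-1}(V)$ is locally constant on $\Homeo(X)$: writing $r>0$ for the $d_X$-distance between the disjoint nonempty compact sets $V$ and $X\setminus V$, any $\psi$ with $d(\psi,\varphi)<r$ satisfies $\psi^{-1}(V)=\varphi^{-1}(V)$ (if $x\in\varphi^{-1}(V)$ then $d_X(\psi(x),\varphi(x))<r$ forces $\psi(x)\in V$, and symmetrically). Hence, for each $n$ and each finite $F\subseteq\Gamma_S$, the set $\{\sigma\in\mathcal{S}:\bigcup_{\gamma\in F}\sigma(\gamma)^{-1}(V_n)=X\}$ depends only on the finitely many locally constant quantities $\sigma(\gamma)^{-1}(V_n)$, $\gamma\in F$, and is therefore open (in fact clopen). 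By the reformulation above,
\[\mathcal{M}\;=\;\bigcap_{n\in\mathbb{N}}\ \bigcup_{F}\ \bigl\{\sigma\in\mathcal{S}:\textstyle\bigcup_{\gamma\in F}\sigma(\gamma)^{-1}(V_n)=X\bigr\},\]
the inner union ranging over the finite subsets $F$ of $\Gamma_S$, which exhibits $\mathcal{M}$ as a countable intersection of open sets.

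For density the point is simply that only finitely many generators are constrained at a time, while $S$ is infinite. Fix a minimal homeomorphism $T$ of $X$; such exist, since the Cantor set admits minimal homeomorphisms (e.g.\ odometers). Let $\sigma_0\in\mathcal{S}$ and let $N$ be a basic open neighborhood of $\sigma_0$, determined by finitely many elements $\gamma_1,\dots,\gamma_k\in\Gamma$ and some $\varepsilon>0$; let $F_0\subseteq U$ be the finite set of generators occurring in $\gamma_1,\dots,\gamma_k$. Since $S$ is infinite we may choose $s_0\in S\setminus F_0$, and we define $\sigma\in\mathcal{S}$ (legitimate, as $\Gamma$ is free on $U$) by $\sigma(s_0):=T$ and $\sigma(u):=\sigma_0(u)$ for all $u\in U\setminus\{s_0\}$. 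Then $\sigma(\gamma_i)=\sigma_0(\gamma_i)$ for each $i$, because the words $\gamma_i$ involve only generators in $F_0$, on which $\sigma$ agrees with $\sigma_0$; hence $\sigma\in N$. On the other hand $\langle s_0\rangle\leq\Gamma_S$ and $\sigma|_{\langle s_0\rangle}$ is the $\mathbb{Z}$-action generated by the minimal homeomorphism $T$, so every $\Gamma_S$-orbit contains a dense $\langle s_0\rangle$-orbit and is therefore dense; thus $\sigma\in\mathcal{M}$. Together with the previous paragraph, this shows $\mathcal{M}$ is $G_\delta$-dense.

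The only step requiring any care is the local-constancy (hence openness) claim in the second paragraph; the reformulation of minimality via a clopen basis and compactness, and the existence of a minimal homeomorphism of the Cantor set, are standard. So I do not anticipate a genuine obstacle here --- the proposition is ultimately a soft consequence of having infinitely many ``free'' generators in $S$ to spare.
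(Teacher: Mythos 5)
Your proof is correct, but it is organized differently from the paper's. The paper decomposes minimality into a countable family of conditions $\mathcal{M}(V)$ ("the $\Gamma_S$-saturation of $V$ is all of $X$"), one for each proper clopen $V\subset X$, shows each is open and dense --- density by redefining the action on a generator $s(n)$ with $n$ large to be a homeomorphism carrying $V$ onto $X\setminus V$ --- and then invokes the Baire category theorem to intersect them. You instead establish the two halves of "G$_\delta$-dense" separately and directly: the G$_\delta$ part via the compactness/finite-subcover reformulation together with the local constancy of $\varphi\mapsto\varphi^{-1}(V)$ (a point the paper leaves to a reference), and the density part by planting a single minimal homeomorphism $T$ (e.g.\ an odometer) on an unused generator $s_0\in S$, which forces every $\Gamma_S$-orbit to contain a dense $\langle s_0\rangle$-orbit. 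Your density argument is the stronger statement (density of the full minimality set, not just of each $\mathcal{M}(V)$) and avoids Baire entirely for this proposition, consistent with the author's remark that the Baire category theorem is dispensable here and is only needed later to combine Propositions 2.1 and 2.2; the trade-off is that you import the standard but nontrivial fact that the Cantor set admits a minimal homeomorphism, whereas the paper only needs the elementary existence of a homeomorphism exchanging a proper clopen set with its complement. Both arguments exploit the same underlying mechanism: basic neighborhoods in $\mathcal{S}$ constrain only finitely many of the infinitely many free generators in $S$, leaving one free to modify.
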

\begin{proof}
Fix an enumeration $S=(s(n))_{n=1}^\infty$ of $S$.
For any proper clopen subset $V$ of $X$,
we show that the following property is open dense in $\mathcal{S}$.
\begin{quote}$\mathcal{M}(V)$: the $\Gamma_S$-saturation $\Gamma_S\cdot V$ of $V$ fulfills $X$.
\end{quote}
It is not hard to check that the property $\mathcal{M}(V)$ is open in $\mathcal{S}$.
(Cf.~the proof of Proposition 3.3 in \cite{Suz}.)
To show the density of $\mathcal{M}(V)$ in $\mathcal{S}$, take a $\varphi \in \Homeo(X)$
satisfying $\varphi(V)=X \setminus V$.
Let $\alpha \in \mathcal{S}$ be given.
For each $n \in \mathbb{N}$, we define $\alpha^{(n)} \in \mathcal{S}$ to be
\[ \alpha^{(n)}_{u} := \left\{ \begin{array}{ll}
 \varphi & {\rm for~} u=s(n),\\
 \alpha_u & {\rm for~} u \in U \setminus \{s(n)\}.\\
 \end{array} \right.\]

The sequence $(\alpha^{(n)})_{n=1}^\infty$ then converges to $\alpha$ in $\mathcal{S}$.
Obviously each $\alpha^{(n)}$ satisfies $\mathcal{M}(V)$.
Therefore the property $\mathcal{M}(V)$ is open dense in $\mathcal{S}$.

Now, thanks to the Baire category theorem, the conjunction $\bigwedge_{V} \mathcal{M}(V)$
is G$_\delta$-dense in $\mathcal{S}$. Obviously this property is equivalent to the stated property.
\end{proof}
We remark that the proofs of Propositions \ref{Prop:amefre} and \ref{Prop:min} themselves
can avoid the Baire category theorem.
However, to combine these two propositions, we need to apply the Baire category theorem.
Here we record the required result as a corollary.
\begin{Cor}\label{Cor:generic}
There is an amenable free action of $\Gamma$ on $X$
whose restriction to $\Gamma_S$ is minimal.
\end{Cor}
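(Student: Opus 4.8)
The plan is to read the statement off directly from Propositions~\ref{Prop:amefre} and~\ref{Prop:min} by means of the Baire category theorem. Recall from the preceding discussion that $\mathcal{S}$ is a Polish space, hence a Baire space, and that it is non-empty (it contains, for instance, the trivial action of $\Gamma$ on $X$). So it suffices to exhibit the desired actions as the points of a suitable residual subset.

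First I would invoke Proposition~\ref{Prop:amefre} to obtain two G$_\delta$-dense subsets of $\mathcal{S}$: the set $\mathcal{F}$ of free actions and the set $\mathcal{A}$ of amenable actions. Next, Proposition~\ref{Prop:min} supplies a third G$_\delta$-dense subset $\mathcal{M}$, namely the set of actions whose restriction to $\Gamma_S$ is minimal. Since a countable (in particular, a finite) intersection of G$_\delta$-dense subsets of a Baire space is again G$_\delta$-dense, the set $\mathcal{F}\cap\mathcal{A}\cap\mathcal{M}$ is G$_\delta$-dense in $\mathcal{S}$, hence non-empty. Any element $\alpha\in\mathcal{F}\cap\mathcal{A}\cap\mathcal{M}$ is then an amenable free action of $\Gamma$ on $X$ whose restriction to $\Gamma_S$ is minimal, which is precisely what is claimed.

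There is essentially no obstacle to overcome: all the genuine work has already been carried out in Propositions~\ref{Prop:amefre} and~\ref{Prop:min}, and the only point that needs to be noted is that the class of G$_\delta$-dense subsets of the Baire space $\mathcal{S}$ is stable under finite intersection. This is immediate: a finite intersection of G$_\delta$ sets is G$_\delta$, and a finite intersection of dense G$_\delta$ sets in a Baire space is dense by the Baire category theorem.
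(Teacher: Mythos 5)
Your argument is correct and is exactly the route the paper takes: the corollary is obtained by intersecting the G$_\delta$-dense sets provided by Propositions~\ref{Prop:amefre} and~\ref{Prop:min} and invoking the Baire category theorem to see that the intersection is nonempty. No further comment is needed.
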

The author does not know concrete examples of $\Gamma$-actions
with these properties.
\section{Proofs of Main results}
In this section, we complete the proofs.
We first recall the following basic observation.
\begin{Lem}\label{Lem:norm}
Let $G$ be a locally compact non-amenable group.
Let $A$ be a $G$-\Cs-algebra.
Assume that $A$ has a nonzero $G$-ideal $I$
whose $G$-action is of the form
$\ad\circ\varphi$ for some strictly continuous homomorphism
$\varphi \colon G \rightarrow \mathcal{U}(M(I))$.
Then $A \rtimes G \neq A \rc G$.
\end{Lem}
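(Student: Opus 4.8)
The plan is to reduce the statement to the ideal $I$ and there exploit that an inner action becomes trivial after an ``untwisting'', turning the two crossed products of $I$ into the two tensor products of $I$ with the group \Cs-algebras. First I would record two standard facts about ideals. Since the full crossed product preserves short exact sequences, $I \rtimes G$ sits inside $A \rtimes G$ as the closed two-sided ideal given by the closure of $C_{\mathrm c}(G, I)\subseteq C_{\mathrm c}(G, A)$ in $A \rtimes G$, and likewise $I \rc G$ sits inside $A \rc G$ as the closure of $C_{\mathrm c}(G,I)$. The canonical surjection $\lambda_A\colon A \rtimes G \to A \rc G$ is the identity on $C_{\mathrm c}(G, A)$, hence it carries $I \rtimes G$ onto $I \rc G$, and its restriction to $I \rtimes G$ is precisely the canonical surjection $\lambda_I\colon I \rtimes G\to I \rc G$. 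Consequently, if $A \rtimes G = A \rc G$ --- that is, if $\lambda_A$ is injective --- then $\lambda_I$ is injective, i.e.\ $I \rtimes G = I \rc G$. It therefore suffices to prove $I \rtimes G \neq I \rc G$.

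For this I would invoke the untwisting isomorphism for the inner action $\ad\circ\varphi$. Strict continuity of $\varphi$ together with $\|\varphi(g)\|=1$ guarantees that $g\mapsto\bar{\pi}(\varphi(g))$ is a strongly continuous unitary representation for every nondegenerate representation $\pi$ of $I$, and the reindexing $f\mapsto(g\mapsto f(g)\varphi(g))$ on $C_{\mathrm c}(G,I)$ intertwines the $\ad\circ\varphi$-twisted convolution with the untwisted one. This upgrades to isomorphisms
\[I \rtimes G \cong I \otimes_{\max}\fg(G), \qquad I \rc G \cong I \otimes \rg(G),\]
the right-hand sides being the crossed products of $I$ by the trivial action. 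Since both isomorphisms are implemented by the same reindexing on the dense subalgebra $C_{\mathrm c}(G,I)$, under them $\lambda_I$ becomes the canonical surjection $q\colon I \otimes_{\max}\fg(G)\to I\otimes\rg(G)$, which on elementary tensors is $a\otimes x\mapsto a\otimes\lambda_G(x)$, where $\lambda_G\colon\fg(G)\to\rg(G)$ is the quotient induced by the left regular representation. Since $G$ is non-amenable, Hulanicki's theorem \cite{Hul} yields a nonzero $x\in\ker\lambda_G$; choosing also a nonzero $a\in I$, the elementary tensor $a\otimes x$ has norm $\|a\|\,\|x\|\neq 0$ in $I \otimes_{\max}\fg(G)$ (so no exactness of the minimal tensor product is needed), whereas $q(a\otimes x)=a\otimes\lambda_G(x)=0$. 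Hence $q$, and therefore $\lambda_I$, fails to be injective, so $I \rtimes G \neq I \rc G$; together with the reduction above this gives $A \rtimes G\neq A\rc G$.

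I do not expect a genuine obstacle here --- the statement is recorded as a basic observation --- but two compatibilities should be verified with some care: that the ideal embedding $I \rtimes G\hookrightarrow A\rtimes G$ carries $\lambda_I$ to the restriction of $\lambda_A$, and that the untwisting isomorphisms of the full and reduced crossed products carry $\lambda_I$ to $q$. Both reduce to a computation on $C_{\mathrm c}(G,I)$, where each map involved is given by an explicit formula.
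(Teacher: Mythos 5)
Your proposal is correct and follows essentially the same route as the paper: reduce to the ideal $I$ via the compatibility of the ideal inclusions $I\rtimes G\subseteq A\rtimes G$, $I\rc G\subseteq A\rc G$ with the canonical quotients, untwist the inner action by the reindexing $f\mapsto (s\mapsto f(s)\varphi(s))$ to reduce to the trivial action, and conclude from Hulanicki's theorem. The only difference is that you spell out the last step (identifying the trivial-action crossed products with $I\otimes_{\max}\fg(G)$ and $I\otimes\rg(G)$ and exhibiting an explicit elementary tensor in the kernel), which the paper leaves implicit.
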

\begin{proof}
Since $C_{\rm c}(G, I)$ is an ideal of $C_{\rm c}(G, A)$,
every $\ast$-representation of $C_{\rm c}(G, I)$ on a Hilbert space
extends to a $\ast$-representation of $C_{\rm c}(G, A)$.
Consequently the canonical map $I \rtimes G \rightarrow A \rtimes G$ is injective.
It thus suffices to show the claim in the case $I=A$.
Let $\tau$ and $\alpha$ denote the trivial $G$-action and the given $G$-action on $A$ respectively.
Define a map $\Phi \colon C_{\mathrm c}(G, A) \rightarrow C_{\mathrm c}(G, A)$
to be
$\Phi(f)(s):= f(s)\varphi(s)$ for $f\in C_{\mathrm c}(G, A)$, $s\in G$.
Then $\Phi$ defines a $\ast$-isomorphism
between the twisted convolution algebras of the $G$-\Cs-algebras $(A, \tau)$ and $(A, \alpha)$.
The $\Phi$ extends to $\ast$-isomorphisms
of their reduced and full \Cs-completions.
Obviously the induced $\ast$-isomorphisms are compatible with the canonical quotient maps.
The claim now follows from Hulanicki's Theorem \cite{Hul}.
\end{proof}

\begin{proof}[Proof of Theorem \ref{Thmint:main}]
Let $G$ be a second countable locally compact non-amenable group.
Choose $\alpha \in \mathcal{S}$ as in Corollary \ref{Cor:generic}.
Take a homomorphism
$\varphi\colon \Gamma \rightarrow G$ onto a dense subgroup of $G$
which annihilates $\Gamma_S$.
Denote $\lambda$ and $\rho$ the left and right translation action of $G$ on itself respectively.
Define $\beta := \alpha \otimes (\lambda \circ \varphi)$
and $A:=C_0(X\times G) \fc{\beta} \Gamma.$
Observe that $\beta$ is minimal
because of the choices of $\alpha$ and $\varphi$.
Also, being an extension of $\alpha$, the $\beta$ is free and amenable.
It therefore follows from Proposition 4.8 of \cite{Ana87} and the Corollary in page 124 of \cite{AS} that $A$ is simple.
Note that the action $\beta$ commutes with the action
$\tilde{\rho}:=\id_{C(X)} \otimes \rho$ of $G$.
Hence $\tilde{\rho}$ induces a $G$-action $\gamma$ on $A$.

Proposition 3.3 of \cite{Tak} yields the relations
\[C_0(G) \fc{\rho} G= C_0(G) \rca{\rho} G \cong \mathbb{K}(L^2(G)).\]
We therefore obtain the following canonical identifications.
\begin{eqnarray*} A\fc{\gamma} G &=& (C_0(X\times G) \rtimes \Gamma)\fc{\gamma }G\\
&=& (C_0(X\times G) \fc{\tilde{\rho}} G)\rtimes \Gamma\\
&=& (C_0(X\times G) \rca{\tilde{\rho}}G)\rtimes\Gamma.
\end{eqnarray*}
By Proposition 4.8 of \cite{Ana87}, we obtain
\[(C_0(X\times G) \rca{\tilde{\rho}} G)\rtimes\Gamma=(C_0(X\times G) \rca{\tilde{\rho}} G)\rc \Gamma.\]
This yields $A \fc{\gamma} G = A \rca{\gamma} G$ as desired.

We now construct an ambient $G$-\Cs-algebra $B$ of $A$ as in the statement.
Observe that the left and right regular representations of $G$
induce the $G$-actions on $\mathbb{K}(L^2(G))$ via the adjoint.
We denote these actions by $\lambda$ and $\rho$ respectively.
Note that these two actions commute.
Let \[\iota \colon C_0(G) \rightarrow M(\mathbb{K}(L^2(G)))=\mathbb{B}(L^2(G))\]
denote the multiplication action of $C_0(G)$ on $L^2(G)$.
Then $\iota$ is $G$-equivariant with respect to both the left and right actions.
Let $\eta := \lambda \circ \varphi$.
Then we equip $C(X)\otimes \mathbb{K}(L^2(G))$ with the $\Gamma$-action $\alpha \otimes \eta$.
Put $B:=(C(X)\otimes \mathbb{K}(L^2(G))) \rtimes \Gamma$.
Since $\mathbb{K}(L^2(G))$ is simple and $\alpha$ is minimal, free, and amenable,
it follows from Proposition 4.8 of \cite{Ana87} and the Corollary in page 122 of \cite{AS} that $B$ is simple.
As $\rho$ commutes with $\eta$,
the $G$-action $\id_{C(X)}\otimes \rho$ induces
a $G$-action $\zeta$ on $B$.
Then notice that $\iota$ induces a non-degenerate $G$-embedding
$\Phi \colon A \rightarrow M(B)$.
By Lemma \ref{Lem:norm},
we obtain
\begin{eqnarray*} 
B \rtimes G &=& ((C(X)\otimes \mathbb{K}(L^2(G)))\rtimes G) \rtimes \Gamma\\
&\neq&((C(X)\otimes \mathbb{K}(L^2(G)))\rc G)\rc \Gamma =B \rc G.
\end{eqnarray*} 
\end{proof}
For a locally compact group $G$ and a $G$-\Cs-algebra $A$,
let $u \colon G \rightarrow \mathcal{U}({A \rc G})$ denote the canonical embedding.
We refer to the $G$-action $\ad \circ u$ as the conjugate action of $G$ on $A \rc G$.
\begin{proof}[Proof of Proposition \ref{Propint:unitalsimple}]
Let $G$ be a discrete exact group.
Then one can find an amenable minimal free action $\alpha$ of $G$ on a compact Hausdorff space $Y$ (cf.~\cite{BO}, \cite{Oza}).
Put $C:=C(Y)\rc G$ and $A:= C^{\otimes \mathbb{N}}$.
Let $\beta$ denote the conjugate action of $G$ on $C$.
Define $\gamma := \beta^{\otimes \mathbb{N}}$.
We set $B:= A \rc G$. Let $\eta$ denote the conjugate action of $G$ on $B$.
Note that $\eta|_A = \gamma$. We show that the inclusion $A \subset B$ satisfies the desired conditions.

Note first that the canonical conditional expectation $E\colon B=A \rc G \rightarrow A$ is $G$-equivariant (see Remark 4.1.10 of \cite{BO} for the detail).
The Corollary in page 124 of \cite{AS} shows the simplicity of $C$.
Hence $A$ is simple.
We next observe that $A$ can be realized as
the $G$-inductive limit of $G$-\Cs-algebras
$A_n:=C^{\otimes n} \otimes C(Y), n\in \mathbb{N}.$
By Proposition 4.8 of \cite{Ana87},
each $A_n$ satisfies
$A_n \rtimes G = A_n \rc G$.
Hence $A\rtimes G = A \rc G$.
The Corollary in page 122 of \cite{AS} shows the simplicity of
$A_n\rc G$ for all $n\in \mathbb{N}$.
This yields the simplicity of $B= A \rc G$.
By Lemma \ref{Lem:norm}, we obtain $B \rtimes G \neq B \rc G$.
\end{proof}
\begin{Rem}\label{Rem:tower}
By modifying the above construction,
one can form an increasing sequence
\[A_1 \subset B_1 \subset A_2 \subset B_2 \subset \cdots \subset A_n \subset B_n \cdots\] of unital simple $G$-\Cs-algebras satisfying
$A_n \rtimes G =A_n \rc G$ and $B_n \rtimes G \neq B_n \rc G$ for all $n \in \mathbb{N}$.
Here we only describe how to construct $A_2$ and $B_2$;
the rest of the construction is the same.
Put $A_1:= A$ and $B_1:=B$, the $G$-\Cs-algebras in the proof of Proposition \ref{Propint:unitalsimple}.
Define $A_2 := {B_1^{\otimes \mathbb{N}}}$, equipped with the diagonal $G$-action.
We identify $B_1$ with the first tensor component of $A_2$.
Put $B_2:= A_2 \rc G$, equipped with the conjugate $G$-action.
Then one can confirm the required relations by the similar argument
to the proof of Proposition \ref{Propint:unitalsimple}.
\end{Rem}
\begin{Rem}\label{Rem:Cuntz}
Thanks to Kirchberg's $\mathcal{O}_2$-absorption theorem (Theorem 3.8 of \cite{KP}),
one can construct $G$-\Cs-algebras $A, B$ in Theorem \ref{Thmint:main}
to further satisfy the relations $A= \mathcal{O}_2 \otimes \mathbb{K} \cong A \rtimes G \cong B$.
Similarly, in Proposition \ref{Propint:unitalsimple}, under the additional assumption that $G$ is countable, one can choose $G$-\Cs-algebras in the statement
to further satisfy $A=\mathcal{O}_2  \cong A \rtimes G \cong B$,
and the analogous statement holds true for Remark \ref{Rem:tower}.
(To make $G$-\Cs-algebras separable, one has to take the amenable $G$-space
$Y$ in the proof to be second countable.)
\end{Rem}
\begin{proof}[Proof of Proposition \ref{Propint:functorialu}]
By Theorem A of \cite{BCL}, one can choose an amenable action $\alpha$ of $G$ on a compact Hausdorff space $Y$.
We set $A:= C(Y^{\mathbb{Z}}) \fc{\sigma} \mathbb{Z}$,
where $\sigma$ denotes the shift automorphism.
As the diagonal $G$-action $\beta:=\alpha^{\otimes \mathbb{Z}}$ commutes with $\sigma$,
the $\beta$ extends to a $G$-action $\gamma$ on $A$.
We then obtain
\begin{eqnarray*} 
A\fc{\gamma} G &=& (C(Y^{\mathbb{Z}}) \fc{\beta} G) \fc{\tilde{\sigma}} \mathbb{Z}\\
&=& (C(Y^{\mathbb{Z}}) \rca{\beta} G) \fc{\tilde{\sigma}} \mathbb{Z}\\
&=& A \rca{\gamma} G,
\end{eqnarray*}
where $\tilde{\sigma}$ denotes the automorphism induced from $\sigma$.
It is not hard to check that $Z(A)=\mathbb{C}$.

Now take a non-degenerate faithful $\ast$-representation $\varphi \colon A \rightarrow \mathbb{B}(\mathfrak{H})$
of $A$ on a Hilbert space $\mathfrak{H}$.
Set $\mathfrak{K}:= \mathfrak{H} \otimes L^2(G)$.
Define $\tilde{\varphi}\colon A \rightarrow \mathbb{B}(\mathfrak{K})$
to be
\[(\tilde{\varphi}(a)( \xi \otimes g))(s):= g(s){\alpha_{s}^{-1}}(a)\xi \]
for $a\in A$, $\xi \in \mathfrak{H}$, $g\in C_{\rm c}(G)$, $s\in G$.
(More precisely, the formula defines an element in $C_{\mathrm c}(G, \mathcal{H})$
which we identify with a dense subspace of $\mathfrak{K}$ in the obvious way.)
Define $B$ to be the \Cs-algebra on $\mathfrak{K}$
generated by $\tilde{\varphi}(A)$ and $\mathbb{K}(\mathfrak{K})$.
Let $\tilde{\lambda}:=\id_{\mathfrak{H}} \otimes \lambda$ be the left translation action
of $G$ on $\mathfrak{K}$.
The composite $\ad\circ \tilde{\lambda}$ then defines a (continuous) $G$-action on $B$ which makes the map $\tilde{\varphi}$ $G$-equivariant.
From now on we identify $A$ with the $G$-\Cs-subalgebra $\tilde{\varphi}(A)$ of $B$.
Observe that $A\cap \mathbb{K}(\mathfrak{K})=0$.
We therefore obtain a $G$-short exact sequence
\[ 0 \longrightarrow \mathbb{K}(\mathfrak{K}) \longrightarrow B {\xrightarrow{~ E ~}} A \longrightarrow 0.\]
Since $Z(A)=\mathbb{C}$ and $Z(\mathbb{K}(\mathfrak{K}))=0$,
a diagram chasing shows that $Z(B)=\mathbb{C}$. The quotient map $E$
provides a $G$-conditional expectation of
the inclusion $A \subset B$.
Lemma \ref{Lem:norm} yields $B\rtimes G \neq B \rc G$.
Therefore the inclusion $A\subset B$ satisfies the required conditions.
\end{proof}
\begin{Rem}
By modifying the above construction,
for any locally compact non-amenable exact group $G$,
one can show that the condition $-\rtimes G \neq -\rc G$
is not inherited to $G$-inductive limits.
Here we sketch the construction.
Let $B$ be the $G$-\Cs-algebra in the proof of Proposition \ref{Propint:functorialu}.
Put $D:=B^{\otimes \mathbb{N}}$ (equipped with the diagonal $G$-action).
Then, on the one hand, by a similar argument to the proof of Proposition \ref{Propint:unitalsimple},
one can show that $D \rtimes G = D \rc G$.
On the other hand, the $D$ can be realized as the $G$-inductive limit of $G$-\Cs-algebras
$D_n:= B^{\otimes n}, n \in \mathbb{N}$.
Lemma \ref{Lem:norm} yields  $D_n \rtimes G \neq D_n \rc G$ for all $n \in \mathbb{N}$.
\end{Rem}
\subsection*{Acknowledgements}
The author is grateful to Jianchao Wu for a stimulating question, which motivates
an original idea of Proposition \ref{Propint:functorialu}, during
the conference at the University of Glasgow (C Star), 2014.
This work was supported by JSPS KAKENHI Grant-in-Aid for Young Scientists
(Start-up) (No.~17H06737) and tenure track funds of Nagoya University.

\end{document}